\newcommand{\dotcup}{\ensuremath{\mathaccent\cdot\cup}}
\newtheorem{theorem}{Theorem}%[section]
\newtheorem{definition}[theorem]{Definition}
\newtheorem{lemma}[theorem]{Lemma}
\newtheorem{example}[theorem]{Example}
\title{Unsharp residuation in effect algebras}
\author{Ivan~Chajda and Helmut~L\"anger}
\date{}
\begin{document}
\footnotetext[1]{Support of the research by \"OAD, project CZ~02/2019, and support of the research of the first author by IGA, project P\v rF~2019~015, is gratefully acknowledged.}
\maketitle
\begin{center}
Dedicated to the memory of Ivo~G.~Rosenberg
\end{center}
\begin{abstract}
Effect algebras and pseudoeffect algebras were introduced by Foulis, Bennett, Dvure\v censkij and Vetterlein as so-called quantum structures which serve as an algebraic axiomatization of the logic of quantum mechanics. A natural question concerns their connections to substructural logics which are described by means of residuated lattices or posets. In a previous paper it was shown that an effect algebra can be organized into a so-called conditionally residuated structure where the adjointness condition holds only for those elements for which the operations $\odot$ and $\rightarrow$ are defined. Because this is a very strong restriction, we try to find another kind of residuation where the terms occurring in the adjointness condition are everywhere defined though the binary operation of a given effect algebra is only partial. Moreover, we work with effect algebras which need not be lattice-ordered and hence the lattice operations join and meet are replaced by means of upper and lower cones which, however, are not elements but subsets. Hence, the resulting concept, the so-called unsharp residuated poset, is equipped with LU-terms which substitute operations, but are everywhere defined. Although this concept seems rather complicated at the first glance, we prove that such an unsharp residuated poset can be conversely organized into an effect algebra or a pseudoeffect algebra depending on commutativity of the multiplication.
\end{abstract}
 
{\bf AMS Subject Classification:} 03G25, 03B47, 06A11, 06F35

{\bf Keywords:} Unsharp residuation, residuated poset, effect algebra, monotonous effect algebra, pseudoeffect algebra, good pseudoeffect algebra

The logics of quantum mechanics is based on so-called effects which form an effect algebra as shown by Foulis and Bennett (\cite{FB}). This algebra is a partial algebra with one partial binary associative and commutative operation. Moreover, every element of this partial algebra has a unique so-called supplement. In effect algebras there can be introduced a partial order relation by means of the partial binary operation in some natural way. If this relation is a lattice order then the corresponding effect algebra is called a lattice effect algebra. The question arises if there is a connection between effect algebras and substructural logics where conjunction and implication are adjoint to each other, i.e.\ logics which form a residuated lattice, see e.g.\ \cite{GJKO}. The authors showed in \cite{CL2} that in the case of lattice effect algebras this is possible if one-sided (so-called left residuation) is used. A similar approach was settled by the authors also for other quantum structures, e.g.\ for orthomodular lattices, see \cite{CL17a} and \cite{CL17b}. Unfortunately, in the logics of quantum mechanics, lattice effect algebras play only a limited role. Much more useful are those effect algebras that are not lattice-ordered. A certain solution was already published in \cite{CH}, but within this paper residuation holds only in the case when the corresponding terms occurring in adjointness are defined. Since this is a rather strict restriction, we are going to introduce our concepts in a more acceptable form. Hence the natural question arises if some kind of residuation can also be introduced in effect algebras that are not lattices. The problem is that in non-lattice ordered effect algebras disjunction and conjunction cannot be defined by join and meet, respectively, since the latter need not exist. Hence also implication cannot be introduced in a sharp sense. Therefore it is of interest if one can apply some unsharp approach where supremum and infimum are replaced by the upper and lower cone, respectively. In the present paper we show that this really is possible.

Let $(P,\leq)$ be a poset, $a,b\in P$ and $A,B\subseteq P$. We put
\begin{align*}
L(A) & :=\{x\in P\mid x\leq y\text{ for all }y\in A\}, \\
U(A) & :=\{x\in P\mid y\leq x\text{ for all }y\in A\}.
\end{align*}
Instead of $L(\{a\})$, $L(\{a,b\})$, $L(A\cup\{a\})$, $L(A\cup B)$ and $L(U(A))$ we simply write $L(a)$, $L(a,b)$, $L(A,a)$, $L(A,B)$ and $LU(A)$, respectively. Analogously, we proceed in similar cases.

For the reader's convenience, we repeat the definition of effect algebra \cite{FB}.

\begin{definition}
An {\em effect algebra} is a partial algebra $\mathbf E=(E,+,{}',0,1)$ of type $(2,1,0,0)$ where $(E,{}',0,1)$ is an algebra and $+$ is a partial operation satisfying the following conditions for all $x,y,z\in E$:
\begin{enumerate}[{\rm(E1)}]
\item $x+y$ is defined if and only if so is $y+x$ and in this case $x+y=y+x$,
\item $(x+y)+z$ is defined if and only if so is $x+(y+z)$ and in this case $(x+y)+z=x+(y+z)$,
\item $x'$ is the unique $u\in E$ with $x+u=1$,
\item if $1+x$ is defined then $x=0$.
\end{enumerate}
On $E$ a binary relation $\leq$ can be defined by
\[
x\leq y\text{ if there exists some }z\in E\text{ with }x+z=y
\]
{\rm(}$x,y\in E${\rm)}. Then $(E,\leq,0,1)$ becomes a bounded poset and $\leq$ is called the {\em induced order} of $\mathbf E$. If $(E,\leq)$ is a lattice then $\mathbf E$ is called {\em lattice-ordered}.
\end{definition}

There follow two examples of effect algebras which are not lattice ordered.

\begin{example}\label{ex1}
Let $E$ denote the set $\{0,a,b,c,d,e,f,g,1\}$ and define $+$ and ${}'$ as follows:
\[
\begin{array}{c|ccccccccc}
+ & 0 & a & b & c & d & e & f & g & 1 \\
\hline
0 & 0 & a & b & c & d & e & f & g & 1 \\
a & a & - & e & f & - & - & - & 1 & - \\
b & b & e & d & g & f & - & 1 & - & - \\
c & c & f & g & - & - & 1 & - & - & - \\
d & d & - & f & - & 1 & - & - & - & - \\
e & e & - & - & 1 & - & - & - & - & - \\
f & f & - & 1 & - & - & - & - & - & - \\
g & g & 1 & - & - & - & - & - & - & - \\
1 & 1 & - & - & - & - & - & - & - & -
\end{array}
\quad
\begin{array}{c|c}
x & x' \\
\hline
0 & 1 \\
a & g \\
b & f \\
c & e \\
d & d \\
e & c \\
f & b \\
g & a \\
1 & 0
\end{array}
\]
Then $\mathbf E:=(E,+,{}',0,1)$ is an effect algebra that is not lattice ordered. Its induced poset is depicted in Figure~1:

\vspace*{-2mm}

\[
\setlength{\unitlength}{7mm}
\begin{picture}(6,9)
\put(3,2){\circle*{.3}}
\put(1,4){\circle*{.3}}
\put(3,4){\circle*{.3}}
\put(5,4){\circle*{.3}}
\put(3,5){\circle*{.3}}
\put(1,6){\circle*{.3}}
\put(3,6){\circle*{.3}}
\put(5,6){\circle*{.3}}
\put(3,8){\circle*{.3}}
\put(3,2){\line(-1,1)2}
\put(3,2){\line(0,1)6}
\put(3,2){\line(1,1)2}
\put(1,6){\line(0,-1)2}
\put(1,6){\line(1,-1)2}
\put(1,6){\line(1,1)2}
\put(5,6){\line(0,-1)2}
\put(5,6){\line(-1,-1)2}
\put(5,6){\line(-1,1)2}
\put(3,6){\line(-1,-1)2}
\put(3,6){\line(1,-1)2}
\put(2.875,1.25){$0$}
\put(.35,3.85){$a$}
\put(3.4,3.85){$b$}
\put(5.4,3.85){$c$}
\put(3.4,4.85){$d$}
\put(.35,5.85){$e$}
\put(3.4,5.85){$f$}
\put(5.4,5.85){$g$}
\put(2.85,8.4){$1$}
\put(2.2,.3){{\rm Fig.~1}}
\end{picture}
\]

\vspace*{-3mm}

\end{example}              

\begin{example}\label{ex2}
Let $E$ denote the set of all subsets of $\{1,\ldots,6\}$ of even cardinality and define
\begin{align*}
A+B & :=A\cup B\text{ if and only if }A\cap B=\emptyset, \\
 A' & :=\{1,\ldots,6\}\setminus A
\end{align*}
{\rm(}$A,B\in E${\rm)}. Then $\mathbf E=(E,+,{}',\emptyset,\{1,\ldots,6\})$ is an effect algebra that is not a lattice-ordered.
\end{example}

We mention some concepts from posets which we will need in the sequel.

Let $(P,\leq)$ be a poset, $a\in P$ and $A,B\subseteq P$. We define $A\leq B$ if and only if $x\leq y$ for all $x\in A$ and all $y\in B$. Instead of $A\leq\{a\}$ we simply write $A\leq a$. Analogously, we proceed in similar cases.

In the sequel we will use the properties of effect algebras listed in the following lemma.

\begin{lemma}\label{lem1}
{\rm(}see {\rm\cite{DV}, \cite{FB})} If $\mathbf E=(E,+,{}',0,1)$ is an effect algebra, $\leq$ its induced order and $a,b,c\in E$ then
\begin{enumerate}[{\rm(i)}]
\item $a''=a$,
\item $a\leq b$ implies $b'\leq a'$,
\item $a+b$ is defined if and only if $a\leq b'$,
\item if $a\leq b$ and $b+c$ is defined then $a+c$ is defined and $a+c\leq b+c$,
\item if $a\leq b$ then $a+(a+b')'=b$,
\item $a+0=0+a=a$,
\item $0'=1$ and $1'=0$.
\end{enumerate}
\end{lemma}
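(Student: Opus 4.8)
The plan is to derive all seven items directly from the axioms (E1)--(E4) and the definition of the induced order, arranging them so that the later items may invoke the earlier ones. The one genuinely delicate thread running through the whole argument is the bookkeeping of definedness of partial sums: I would repeatedly use the convention that a bracketed expression such as $(x+y)+z$ may be asserted to be defined only when $x+y$ is itself defined, and combine this with the biconditionals in (E1) and (E2) in order to transport definedness between differently bracketed sums.

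First I would dispose of the purely algebraic identities (i), (vii), (vi) and (ii). For (i), $a+a'=1$ gives $a'+a=1$ by (E1), so $a$ solves $a'+u=1$ and uniqueness in (E3) forces $a''=a$. For (vii), the sum $1+1'=1$ is defined, so (E4) yields $1'=0$; then $1+0=1$ gives $0+1=1$ by (E1), whence $1$ solves $0+u=1$ and uniqueness gives $0'=1$. For (vi) I would start from $a'+a=1$, form $(a'+a)+0=1+0=1$ (the last step using (vii)), apply (E2) to obtain $a'+(a+0)=1$, and conclude $a+0=a''=a$ by uniqueness and (i), with $0+a=a$ following from (E1). For (ii), assuming $a+z=b$, the identity $b+b'=1$ becomes $(a+z)+b'=1$, which (E2) rebrackets to $a+(z+b')=1$, so $z+b'=a'$ by uniqueness and hence $b'+z=a'$ by (E1), witnessing $b'\le a'$.

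The heart of the matter is (iii), and this is where I expect the main obstacle to lie, since it is precisely the point at which the partiality of $+$ must be handled rather than glossed over. For the forward direction, if $a+b$ is defined with value $c$, then $(a+b)+c'=1$ rebrackets by (E2) to $a+(b+c')=1$, forcing $b+c'=a'$ and thus $b\le a'$; applying (ii) and (i) converts this into $a\le b'$. For the converse, from $a\le b'$ I would first deduce $b\le a'$ via (ii) and (i), say $b+w=a'$; then $a+a'=a+(b+w)$ is defined, so (E2) makes the rebracketed $(a+b)+w$ defined, and by the definedness convention this already requires $a+b$ to be defined. It is this transfer of definedness across (E2) that is the crucial and least automatic step.

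Finally, (iv) and (v) are rearrangement arguments resting on (E1), (E2) and the earlier items. For (iv), from $a+z=b$ and $b+c$ defined I would run the chain $(a+c)+z=a+(c+z)=a+(z+c)=(a+z)+c=b+c$, which simultaneously shows $a+c$ is defined and exhibits $z$ as a witness for $a+c\le b+c$. For (v), (iii) together with (i) guarantees that $a+b'$ is defined whenever $a\le b$; writing $a+z=b$, the chain $(a+b')+z=a+(b'+z)=a+(z+b')=(a+z)+b'=b+b'=1$ identifies $z$ as the supplement of $a+b'$, so $z=(a+b')'$ and therefore $a+(a+b')'=a+z=b$.
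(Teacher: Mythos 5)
Your proof is correct, and it is worth noting that the paper itself offers no proof of this lemma at all: it simply cites \cite{DV} and \cite{FB}, where these facts are established by exactly the kind of direct verification from (E1)--(E4) that you carry out. Your derivation matches the standard arguments in those sources: (i) and (vii) by uniqueness in (E3) together with (E4), (vi) and (ii) by rebracketing $a'+a=1$ respectively $(a+z)+b'=1$ via (E2), and (iii)--(v) by the associativity/commutativity shuffles you describe. You also correctly identify the one genuinely delicate point, namely that (E2) transfers \emph{definedness} and not merely values, so that in the converse direction of (iii) the definedness of $a+(b+w)=a+a'$ forces $(a+b)+w$, and hence the inner sum $a+b$, to be defined; this is precisely the reading of (E2) under the usual partial-algebra convention, and without it the converse of (iii) would not go through. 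Two small reading conventions in your write-up deserve to be made explicit if this were expanded: the chains in (iv) and (v) must be read from right to left so that definedness propagates from the known-defined sum ($b+c$, respectively $b+b'=1$) backwards through (E2) and (E1), and in (iv) the step from $a+(z+c)$ to $a+(c+z)$ is legitimate because $z+c$ and $c+z$ are the \emph{same element} by (E1), so the two expressions are defined simultaneously. With those glosses, every step is sound and the ordering (i), (vii), (vi), (ii), (iii), (iv), (v) respects all dependencies.
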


\begin{definition}
An {\em effect algebra} $(E,+,{}',0,1)$ is called {\em monotonous} if it satisfies the following condition for all $x\in E$ and all non-empty subsets $A,B$ of $E$:
\[
A\cup B\leq x'\text{ and }L(A)\leq U(B)\text{ imply }L(x+A)\leq U(x+B).
\]
Here and in the following $x+A$ denotes the set $\{x+y\mid y\in A\}$.
\end{definition}

Since $A\cup B\leq x'$, $x+A$ and $x+B$ are defined.

\begin{lemma}
Let $(E,+,{}',0,1)$ be a monotonous effect algebra, $a\in E$ and $A,B$ non-empty subsets of $E$ and assume $a'\leq A\cup B$ and $L(A)\leq U(B)$. Then $L(a\odot A)\leq U(a\odot B)$.
\end{lemma}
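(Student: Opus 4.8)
The plan is to recognize this statement as the De~Morgan dual of the defining condition of monotonicity and to transfer it across the complementation map. Recall that for $a,y\in E$ the product is $a\odot y:=(a'+y')'$, which is defined precisely when $a'+y'$ is, i.e.\ by Lemma~\ref{lem1}(i),(iii) when $a'\leq y$; the hypothesis $a'\leq A\cup B$ therefore guarantees that $a\odot y$ is defined for every $y\in A\cup B$, so that $a\odot A$ and $a\odot B$ are well-defined non-empty sets. For a subset $S\subseteq E$ write $S':=\{s'\mid s\in S\}$. The whole argument rests on three elementary facts, each an immediate consequence of Lemma~\ref{lem1}(i),(ii): for all $S,T\subseteq E$,
\begin{align*}
L(S')&=(U(S))', & U(S')&=(L(S))', & S'\leq T'&\Longleftrightarrow T\leq S.
\end{align*}
Indeed, $x\leq s'\Leftrightarrow s\leq x'$ yields the first two identities, and $s'\leq t'\Leftrightarrow t\leq s$ yields the order-reversal equivalence.

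First I would rewrite both sides of the desired inequality in terms of $+$. Since $a\odot A=(a'+A')'$ and $a\odot B=(a'+B')'$, the duality identities give $L(a\odot A)=\bigl(U(a'+A')\bigr)'$ and $U(a\odot B)=\bigl(L(a'+B')\bigr)'$. Applying the equivalence $S'\leq T'\Leftrightarrow T\leq S$ with $S:=U(a'+A')$ and $T:=L(a'+B')$, the goal $L(a\odot A)\leq U(a\odot B)$ is seen to be equivalent to
\[
L(a'+B')\leq U(a'+A').
\]

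It remains to produce this last inequality from monotonicity. I would apply the defining condition with distinguished element $a'$, taking its first set to be $B'$ and its second set to be $A'$ (both non-empty since $A,B$ are). Its two premises must be checked: the inclusion premise $B'\cup A'\leq(a')'=a$ follows from $a'\leq A\cup B$ by order-reversal of complementation, and the cone premise $L(B')\leq U(A')$ follows from the assumption $L(A)\leq U(B)$ via $L(B')=(U(B))'$, $U(A')=(L(A))'$ together with the same order-reversal. Monotonicity then delivers $L(a'+B')\leq U(a'+A')$, which is exactly what was needed. The only genuine obstacle is the bookkeeping: passing to complements simultaneously reverses the order and interchanges $L$ with $U$, so the roles of $A$ and $B$ (and of upper and lower cones) get swapped when one moves from the $\odot$-formulation to the $+$-formulation. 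Once the three duality facts are recorded, however, the proof is a direct substitution.
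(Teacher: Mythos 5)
Your proof is correct and is essentially identical to the paper's own argument: both pass to complements, derive $A'\cup B'\leq a$ and $L(B')=(U(B))'\leq(L(A))'=U(A')$, apply the monotonicity condition at the element $a'$ with the sets $B'$ and $A'$ in swapped roles, and translate back through the duality identities $L(S')=(U(S))'$ and $U(S')=(L(S))'$. Your write-up merely makes explicit the bookkeeping (the three duality facts and the interchange of $A$ and $B$ with $L$ and $U$) that the paper compresses into a single displayed chain.
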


\begin{proof}
We have $A'\cup B'\leq a$ and
\[
L(B')=(U(B))'\leq(L(A))'=U(A')
\]
and hence
\[
L(a\odot A)=L((a'+A'))=(U(a'+A'))'\leq(L(a'+B'))'=U((a'+B'))=U(a\odot B).
\]
\end{proof}

\begin{lemma}
Let $\mathbf E=(E,+,{}',0,1)$ be an effect algebra satisfying the following conditions for all $x,y\in E$ and all non-empty subsets $A$ of $E$:
\begin{itemize}
\item If $A\leq y'$ and $x\in L(y+A)$ then there exist $z\in L(y)$ and $u\in L(A)$ with $z+u=x$,
\item if $A\leq y'$ and $x\in U(y+A)$ then there exist $z\in U(y)$ and $u\in U(A)$ with $z+u=x$.
\end{itemize}
Then $\mathbf E$ is monotonous.
\end{lemma}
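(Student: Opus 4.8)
The plan is to unwind the definition of monotonicity and reduce it to a single pointwise inequality that can be settled using the two decomposition hypotheses together with the standard monotonicity of $+$ recorded in Lemma~\ref{lem1}(iv). So I fix $x\in E$ and non-empty subsets $A,B\subseteq E$ with $A\cup B\leq x'$ and $L(A)\leq U(B)$; since $A\cup B\leq x'$ amounts to $A\leq x'$ together with $B\leq x'$, both hypotheses become applicable with $y=x$. The conclusion $L(x+A)\leq U(x+B)$ means $w\leq v$ for every $w\in L(x+A)$ and every $v\in U(x+B)$, so I pick arbitrary such $w,v$ and aim to prove $w\leq v$.

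First I would apply the first hypothesis to $w\in L(x+A)$ (with $y=x$, using $A\leq x'$) to obtain $p\in L(x)$ and $q\in L(A)$ with $p+q=w$, and the second hypothesis to $v\in U(x+B)$ (with $y=x$, using $B\leq x'$) to obtain $r\in U(x)$ and $s\in U(B)$ with $r+s=v$. The order assumptions then yield the componentwise inequalities: from $p\in L(x)$ and $r\in U(x)$ we get $p\leq x\leq r$, hence $p\leq r$; and from $q\in L(A)$, $s\in U(B)$ together with $L(A)\leq U(B)$ we get $q\leq s$.

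It remains to transport these two inequalities through the partial sum to conclude $w=p+q\leq r+s=v$, and this is the only delicate point. The subtlety is that Lemma~\ref{lem1}(iv) produces definedness of a \emph{smaller} sum out of definedness of a \emph{larger} one, so I must start from the sum already known to be defined, namely $v=r+s$, and descend rather than ascend. Concretely, from $p\leq r$ and $r+s$ defined, Lemma~\ref{lem1}(iv) gives that $p+s$ is defined with $p+s\leq r+s$; since $p+s$ is then defined, axiom (E1) guarantees $s+p$ is defined, so from $q\leq s$ and $s+p$ defined Lemma~\ref{lem1}(iv) gives $q+p$ defined with $q+p\leq s+p$, i.e.\ $p+q\leq p+s$ after commuting. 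Chaining these yields $w=p+q\leq p+s\leq r+s=v$, which is exactly the required inequality, so $L(x+A)\leq U(x+B)$ and $\mathbf E$ is monotonous. I expect the main obstacle to be precisely this orchestration of Lemma~\ref{lem1}(iv): one must choose the order of the two applications and insert commutativity via (E1) so that at each step the relevant sum is already guaranteed to exist, working downward from the defined value $v$ instead of upward from $w$.
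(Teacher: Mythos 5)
Your proposal is correct and follows essentially the same route as the paper: decompose the arbitrary $w\in L(x+A)$ and $v\in U(x+B)$ via the two hypotheses, deduce the componentwise inequalities $p\leq x\leq r$ and $q\leq s$, and conclude $w\leq v$. The only difference is that the paper simply asserts $c=e+f\leq g+h=d$ without comment, whereas you carefully justify this via two applications of Lemma~\ref{lem1}(iv) together with (E1), correctly descending from the defined sum $r+s$; this fills in a detail the paper leaves implicit rather than changing the argument.
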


\begin{proof}
Let $a\in E$ and $A,B$ be non-empty subsets of $E$ and assume $A\cup B\leq a'$, $L(A)\leq U(B)$, $c\in L(a+A)$ and $d\in U(a+B)$. Then there exist $e\in L(a)$, $f\in L(A)$, $g\in U(a)$ and $h\in U(B)$ with $e+f=c$ and $g+h=d$. Hence $e\leq a\leq g$ and $f\leq h$ and therefore $c=e+f\leq g+h=d$.
\end{proof}

One can easily check that the effect algebra from Example~\ref{ex1} is monotonous. The same is true for the effect algebra $\mathbf E$ from Example~\ref{ex2}. This can be seen as follows:

If $I,J\neq\emptyset$ and $A,A_i,B_j\in E$ for all $i\in I$ and all $j\in J$,
\[
\bigcup_{i\in I}A_i\cup\bigcup_{j\in J}B_j\subseteq A'
\]
and
\[
L(\{A_i\mid i\in I\})\leq U(\{B_j\mid j\in J\})
\]
then
\[
\bigcap_{i\in I}A_i\subseteq\bigcup_{j\in J}B_j
\]
and therefore
\[
\bigcap_{i\in I}(A\dotcup A_i)=A\dotcup\bigcap_{i\in I}A_i\subseteq A\dotcup\bigcup_{j\in J}B_j=\bigcup_{j\in J}(A\dotcup B_j)
\]
which shows
\[
L(\{A\dotcup A_i\mid i\in I\})\leq U(\{A\dotcup B_j\mid j\in J\}).
\]

A {\em partial monoid} is a partial algebra $\mathbf A=(A,\odot,1)$ of type $(2,0)$ where $\odot$ is a partial operation satisfying the following conditions for all $x,y,z\in A$:
\begin{enumerate}[{\rm(i)}]
\item $(x\odot y)\odot z$ is defined if and only if so is $x\odot(y\odot z)$ and in this case $(x\odot y)\odot z=x\odot(y\odot z)$,
\item $x\odot1\approx1\odot x\approx x$.
\end{enumerate}
The {\em partial monoid} $\mathbf A$ is called {\em commutative} if it satisfies the following condition for all $x,y\in A$:
\begin{enumerate}
\item[(iii)] $x\odot y$ is defined if and only if so is $y\odot x$ and in this case $x\odot y=y\odot x$,
\end{enumerate}

Now we are ready to define our main concept.

\begin{definition}
A {\em commutative unsharp residuated poset} is an ordered seventuple $\mathbf C=(C,\leq,\odot,\rightarrow,{}',0,1)$ where $\rightarrow:C^2\rightarrow2^C$ and the following hold for all $x,y,z\in C$:
\begin{enumerate}[{\rm(C1)}]
\item $(C,\leq,{}',0,1)$ is a bounded poset with an antitone involution,
\item $(C,\odot,1)$ is a partial commutative monoid where $x\odot y$ is defined if and only if $x'\leq y$. Moreover, $z'\leq x\leq y$ implies $x\odot z\leq y\odot z$,
\item $L(U(x,y')\odot y)\leq UL(y,z)$ if and only if $LU(x,y')\leq U(y\rightarrow z)$,
\item $x\rightarrow0\approx\{x'\}$,
\end{enumerate}
Condition {\rm(C3)} is called {\em unsharp adjointness}. The commutative unsharp residuated poset $\mathbf C$ is called {\em divisible} if $x\leq y$ implies $y\odot(y\rightarrow x)=L(x)$ {\rm(}$x,y\in C${\rm)}.
\end{definition}

Note that because of $y'\leq U(x,y')$ the expression $U(x,y')\odot y$ is well-defined.

Let us note that in the definition of unsharp adjointness we have an additional element $y$ in the term $UL(y,z)$ on the right-hand side of the first inequality and an additional element $y'$ in the term $U(x,y')$ on the left-hand side of both inequalities. In our paper \cite{CL1} (where, however, adjointness is not unsharp) the corresponding {\em residuation} is called {\em relative}. This means that it is ``relative to $y$''.

Using our concept of commutative unsharp residuated poset, we can prove the following conversion of a monotonous effect algebra into this kind of residuated poset.

\begin{theorem}\label{th1}
Let $\mathbf E=(E,+,{}',0,1)$ be a monotonous effect algebra with induced order $\leq$ and put
\begin{align*}
      x\odot y & :=(x'+y')'\text{ if and only if }x'\leq y, \\
x\rightarrow y & :=x'+L(x,y)
\end{align*}
{\rm(}$x,y\in E${\rm)}. Then $\mathbb C(\mathbf E):=(E,\leq,\odot,\rightarrow,{}',0,1)$ is a divisible commutative unsharp residuated poset.
\end{theorem}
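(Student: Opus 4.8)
The plan is to verify (C1)--(C4) together with divisibility one at a time, treating the unsharp adjointness (C3) as the real content. The organising observation throughout is that the prescription $x\odot y=(x'+y')'$ is the De~Morgan dual of $+$: applying ${}'$ to every element of a set $S$ yields $S'$, one has the standard cone identities $L(S')=(U(S))'$ and $U(S')=(L(S))'$, and $A\leq B$ iff $B'\leq A'$; these will be used repeatedly. For (C1) the antitone involution property is Lemma~\ref{lem1}(i),(ii), and $(E,\leq,0,1)$ is bounded by definition. For (C2), commutativity and the domain of $\odot$ follow from Lemma~\ref{lem1}(iii) (which gives $x'\leq y\iff y'\leq x$) together with (E1); associativity with matching domains is the De~Morgan image of (E2) via $(x\odot y)\odot z=((x'+y')+z')'$ and $x\odot(y\odot z)=(x'+(y'+z'))'$; the unit laws use Lemma~\ref{lem1}(vi),(vii); and the clause $z'\leq x\leq y\Rightarrow x\odot z\leq y\odot z$ is Lemma~\ref{lem1}(ii),(iv) transported through ${}'$. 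For (C4) one notes $L(x,0)=\{0\}$, so $x\rightarrow0=x'+\{0\}=\{x'\}$ by Lemma~\ref{lem1}(vi).

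Next I would establish the computational identity
\[
y\odot(y'+t)=t\qquad(t\leq y),
\]
which is immediate from Lemma~\ref{lem1}(v) applied with $a=y'$ and $b=t'$ (yielding $y'+(y'+t)'=t'$, hence $(y'+(y'+t)')'=t$). Since $x\leq y$ forces $L(y,x)=L(x)$, divisibility follows at once: $y\odot(y\rightarrow x)=y\odot(y'+L(x))=\{t\mid t\in L(x)\}=L(x)$. This identity is also the engine driving (C3).

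The hard part is (C3). First I would rewrite its left inequality. From $u\in U(x,y')\iff u'\in L(x',y)$ one gets $U(x,y')\odot y=(y'+L(x',y))'$, whence by the cone identities and $A\leq B\iff B'\leq A'$,
\[
L\bigl(U(x,y')\odot y\bigr)\leq UL(y,z)\iff LU(y',z')\leq U\bigl(y'+L(y,x')\bigr).
\]
Writing $R(p,q)$ for the assertion $LU(p,y')\leq U(y'+L(y,q))$, this says that the left inequality of (C3) is $R(z',x')$, whereas the right inequality is literally $R(x,z)$. Thus (C3) reduces to the symmetry $R(x,z)\iff R(z',x')$.

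I would then prove the single implication $R(x,z)\Rightarrow R(z',x')$, i.e.\ that the right inequality implies the left, by invoking the monotonous-effect-algebra lemma (the one concluding $L(a\odot A)\leq U(a\odot B)$) with $a:=y$, $A:=U(x,y')$ and $B:=y'+L(y,z)$. Both are nonempty ($1\in A$, $y'\in B$), and the hypotheses hold: $y'\leq A\cup B$ because every element of $U(x,y')$ and every $y'+t$ dominates $y'$, while $L(A)=LU(x,y')\leq U(B)=U(y'+L(y,z))$ is exactly $R(x,z)$. The conclusion is $L(y\odot A)\leq U(y\odot B)$; but $y\odot A=U(x,y')\odot y$ by commutativity, and by the displayed identity $y\odot B=L(y,z)$, so $U(y\odot B)=UL(y,z)$, giving precisely $R(z',x')$. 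The reverse implication is then free: substituting the pair $(z',x')$ for $(x,z)$ in $R(x,z)\Rightarrow R(z',x')$ yields $R(z',x')\Rightarrow R(x,z)$, i.e.\ the left inequality implies the right. Hence both directions of (C3) come from one application of monotonicity, with the involution supplying the second for free. The main obstacle is isolating the correct reformulation $R$ and the exact sets $A,B$ that let the monotonicity lemma discharge one implication; once that is in place, the De~Morgan symmetry finishes the job.
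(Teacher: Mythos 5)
Your proposal is correct, and on the central condition (C3) it takes a genuinely different route from the paper's. The paper proves the two implications of unsharp adjointness separately and directly: for the left-to-right direction it applies monotonicity in its defining $+$-form with $x:=b'$, $A:=U(a,b')\odot b$, $B:=L(b,c)$, using Lemma~\ref{lem1}(v) in the guise $b'+(U(a,b')\odot b)=U(a,b')$; for the right-to-left direction it applies the derived $\odot$-form lemma (the one concluding $L(a\odot A)\leq U(a\odot B)$) with $a:=b$, $A:=U(a,b')$, $B:=b\rightarrow c$ and the computation $(b\rightarrow c)\odot b=L(b,c)$ --- this second step is essentially identical to yours. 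Your innovation is to dispense with the first, direct application altogether: after the De~Morgan reformulation $U(x,y')\odot y=(y'+L(x',y))'$ you recognize both inequalities of (C3) as instances $R(x,z)$ and $R(z',x')$ of a single predicate, prove the one implication $R(p,q)\Rightarrow R(q',p')$ for all $p,q$ by a single use of the $\odot$-lemma together with the identity $y\odot(y'+t)=t$ for $t\leq y$ (again Lemma~\ref{lem1}(v)), and then obtain the converse for free by instantiating at $(z',x')$ and using the involution. I checked the reformulation: $u\in U(x,y')\iff u'\in L(x',y)$ does give $L(U(x,y')\odot y)\leq UL(y,z)\iff LU(y',z')\leq U(y'+L(y,x'))$, the hypotheses of the $\odot$-lemma ($y'\leq A\cup B$, nonemptiness via $1\in A$ and $y'\in B$) hold, and $y\odot B=L(y,z)$ is legitimate since every $t\in L(y,z)$ satisfies $t\leq y$; so the symmetry argument is sound. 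What each approach buys: the paper's version is lighter on set-level De~Morgan bookkeeping and shows each direction concretely, invoking monotonicity in both of its available forms; yours is more economical --- monotonicity enters exactly once --- and makes explicit the duality that the paper exploits only implicitly when it derives the $\odot$-lemma from the defining $+$-form. The remaining items, (C1), (C2), (C4) and divisibility via $y\odot(y'+t)=t$ with $L(y,x)=L(x)$ for $x\leq y$, match the paper's verification.
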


\begin{proof}
Let $a,b,c\in E$. Obviously, (C1) and the first part of (C2) hold.
\begin{enumerate}
\item[(C2)] If $c'\leq a\leq b$ then
\[
a\odot c=(a'+c')'\leq(b'+c')'=b\odot c.
\]
\item[(C3)] If $L(U(a,b')\odot b)\leq UL(b,c)$ then because of $b'\leq U(a,b')$, $U(a,b')\odot b\leq b$ and $L(b,c)\leq b$ we have
\[
LU(a,b')=L(b'+(U(a,b')\odot b))\leq U(b'+L(b,c))=U(b\rightarrow c).
\]
If, conversely, $LU(a,b')\leq U(b\rightarrow c)$ then because of $b'\leq U(a,b')$, $b'\leq b\rightarrow c$ and $L(b,c)\leq b$ we have
\[
L(U(a,b')\odot b)\leq U((b\rightarrow c)\odot b)=U((b'+L(b,c))\odot b)=U(((b'+L(b,c))'+b')')=UL(b,c).
\]
\item[(C4)] We have $x\rightarrow0\approx x'+L(x,0)\approx\{x'\}$.
\end{enumerate}
If, finally, $a\leq b$ then $L(a)\leq b$ and we have
\[
b\odot(b\rightarrow a)=b\odot(b'+L(b,a))=b\odot(b'+L(a))=(b'+(b'+L(a))')'=L(a)
\]
proving divisibility.
\end{proof}

\begin{example}
Since the effect algebras from Example~\ref{ex1} and \ref{ex2} are monotonous, by Theorem~\ref{th1}, they can be converted into divisible commutative unsharp residuated posets. For the effect algebra from Example~\ref{ex1}, $\odot$ and $\rightarrow$ are given by the following tables:
\[
\begin{array}{c|ccccccccc}
\odot & 0 & a & b & c & d & e & f & g & 1 \\
\hline
  0   & - & - & - & - & - & - & - & - & 0 \\
	a   & - & - & - & - & - & - & - & 0 & a \\
	b   & - & - & - & - & - & - & 0 & - & b \\
	c   & - & - & - & - & - & 0 & - & - & c \\
	d   & - & - & - & - & 0 & - & b & - & d \\
	e   & - & - & - & 0 & - & - & a & b & e \\
	f   & - & - & 0 & - & b & a & d & c & f \\
	g   & - & 0 & - & - & - & b & c & - & g \\
	1   & 0 & a & b & c & d & e & f & g & 1
	\end{array}
\]
\[
\begin{array}{c|c|c|c|c|c|c|c|c|c}
\rightarrow &   0   &    a    &    b    &    c    &     d     &      e      &        f        &      g      &        1 \\
\hline
      0     & \{1\} &  \{1\}  &  \{1\}  &  \{1\}  &   \{1\}   &    \{1\}    &      \{1\}      &    \{1\}    &      \{1\} \\
\hline
			a     & \{g\} & \{g,1\} &  \{g\}  &  \{g\}  &   \{g\}   &   \{g,1\}   &     \{g,1\}     &    \{g\}    &     \{g,1\} \\
\hline
			b     & \{f\}	&  \{f\}	& \{f,1\}	&  \{f\}	&  \{f,1\}	&   \{f,1\}	  &     \{f,1\}	    &   \{f,1\}	  &     \{f,1\}	\\
\hline
			c     & \{e\} &  \{e\}  &  \{e\}  & \{e,1\} &   \{e\}   &    \{e\}    &     \{e,1\}     &   \{e,1\}   &     \{e,1\} \\
\hline
			d     & \{d\} &  \{d\}  & \{d,f\} &  \{d\}  & \{d,f,1\} &   \{d,f\}   &    \{d,f,1\}    &   \{d,f\}   &    \{d,f,1\} \\
\hline
			e     & \{c\} & \{c,f\} & \{c,g\} &  \{c\}  &  \{c,g\}  & \{c,f,g,1\} &    \{c,f,g\}    &   \{c,g\}   &   \{c,f,g,1\} \\
\hline
			f     & \{b\} & \{b,e\} & \{b,d\} & \{b,g\} & \{b,d,f\} &  \{b,d,e\}  & \{b,d,e,f,g,1\} &  \{b,d,g\}  & \{b,d,e,f,g,1\} \\
\hline
			g     & \{a\} &  \{a\}  & \{a,e\} & \{a,f\} &  \{a,e\}  &   \{a,e\}   &    \{a,e,f\}    & \{a,e,f,1\} &   \{a,e,f,1\} \\
\hline
			1     & \{0\} & \{0,a\} & \{0,b\} & \{0,c\} & \{0,b,d\} & \{0,a,b,e\} & \{0,a,b,c,d,f\} & \{0,b,c,g\} & E
\end{array}
\]
The operation $\rightarrow$ is unsharp since for $x,y\in E$, $x\rightarrow y$ need not be a singleton. One can see that the operator $\rightarrow$ is everywhere defined contrary to the fact that $\odot$ is only a partial operation. If $\mathbf E$ denotes the effect algebra from Example~\ref{ex2} then
\begin{align*}
      A\odot B & =A\cap B\text{ if and only if }A'\subseteq B, \\
A\rightarrow B & =\{A'\cup C\mid C\in E,C\subseteq A\cap B\}=\{D\in E\mid A'\subseteq D\subseteq A'\cup B\}
\end{align*}
for all $A,B\in E$.
\end{example}

Also, conversely, we can show that every commutative unsharp residuated poset can be organized into an effect algebra.

\begin{theorem}
Let $\mathbf C=(C,\leq,\odot,\rightarrow,{}',0,1)$ be a commutative unsharp residuated poset and put
\[
x+y:=(x'\odot y')'\text{ if and only if }x\leq y'
\]
{\rm(}$x,y\in C${\rm)}. Then $\mathbb E(\mathbf C):=(C,+,{}',0,1)$ is an effect algebra whose induced order coincides with the order in $\mathbf C$.
\end{theorem}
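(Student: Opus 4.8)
My plan is to check the effect-algebra axioms (E1)--(E4) and then read off the induced order. The basic book-keeping is that, by the definedness rule in (C2), $a\odot b$ exists exactly when $a'\le b$; hence $x'\odot y'$ exists if and only if $x\le y'$, which is precisely when $x+y$ is declared, and since ${}'$ is an involution we have $(x+y)'=x'\odot y'$. With this observation (E1) is immediate: $x+y$ is defined iff $x\le y'$ iff $y\le x'$ iff $y+x$ is defined, and commutativity of $\odot$ gives $x+y=(x'\odot y')'=(y'\odot x')'=y+x$. Axiom (E4) is also immediate, since $1+x$ defined means $1\le x'$, forcing $x'=1$ and hence $x=0$.

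For (E2) I would transport the associativity of the partial monoid $(C,\odot,1)$ across the involution. Using $(x+y)'=x'\odot y'$ one gets $(x+y)+z=\bigl((x'\odot y')\odot z'\bigr)'$ and $x+(y+z)=\bigl(x'\odot(y'\odot z')\bigr)'$. Here $(x+y)+z$ is defined exactly when the iterated product $(x'\odot y')\odot z'$ is defined (the inner $\odot$ encoding $x\le y'$ and the outer one $x+y\le z'$), and symmetrically for the other association; so condition (i) for a partial monoid delivers simultaneously the equivalence of the two domains and the equality of the two values.

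The crux, and the step I expect to be the main obstacle, is (E3): I must show that $x+x'=1$ and that $x'$ is the unique supplement. As $x+x'=(x'\odot x)'$, existence amounts to $x'\odot x=0$, and I would extract this from unsharp adjointness (C3). Substituting $(x,y,z)=(x',x,0)$ and using (C4) to rewrite $x\rightarrow0=\{x'\}$, the right-hand inequality of (C3) becomes $L(x')\le U(x')$, which is true; therefore the left-hand inequality $L\bigl(U(x')\odot x\bigr)\le UL(x,0)$ holds. By the monotonicity clause of (C2) the set $U(x')\odot x$ has least element $x'\odot x$, so its lower cone equals $L(x'\odot x)$, whereas $UL(x,0)=U(0)=C$; comparing with $0\in C$ collapses $L(x'\odot x)$ to $\{0\}$ and hence $x'\odot x=0$. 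For uniqueness I would begin with $x+u=1$, i.e.\ $x'\odot u'=0$ and $u\le x'$, and run (C3) at $(x,y,z)=(x',u',0)$. Since $u\le x'$ gives $U(x',u)=U(x')$ with least value $x'\odot u'=0$, the left-hand inequality reads $\{0\}\le UL(u',0)$ and holds automatically, so the right-hand one $L(x')\le U(u'\rightarrow0)=U(u)$ holds; evaluating at $x'\in L(x')$ and $u\in U(u)$ yields $x'\le u$, which with $u\le x'$ gives $u=x'$.

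Once (E1)--(E4) are established, $\mathbb E(\mathbf C)$ is an effect algebra and the order question becomes easy. Writing $\le_{\mathrm{ind}}$ for its induced order, Lemma~\ref{lem1}(iii) applied to $\mathbb E(\mathbf C)$ says that $a+b$ is defined if and only if $a\le_{\mathrm{ind}}b'$, while by construction $a+b$ is defined if and only if $a\le b'$. Since ${}'$ is a bijection of $C$, letting $b'$ sweep over all elements turns this into $a\le_{\mathrm{ind}}c\iff a\le c$ for all $a,c\in C$; that is, the induced order coincides with $\le$, as required.
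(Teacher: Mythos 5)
Your proposal is correct and follows essentially the same route as the paper's proof: (E1), (E2) and (E4) by transporting the partial-monoid axioms across the involution, (E3) by specializing the unsharp adjointness (C3) with $z=0$ together with (C4) (first at $(x',x,0)$ to get $x'\odot x=0$, then at $(x',u',0)$ to force a supplement $u$ to satisfy $x'\le u$, hence $u=x'$), and the identification of the induced order via the definedness criterion of Lemma~\ref{lem1}(iii). Your small variations --- using the monotonicity clause of (C2) to identify $x'\odot x$ as the least element of $U(x')\odot x$ rather than the paper's direct membership argument, and proving existence and uniqueness separately instead of the paper's biconditional $a\odot b=0\Leftrightarrow b=a'$ --- are cosmetic.
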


\begin{proof}
Let $a,b\in C$. Obviously, (E1), (E2) and (E4) hold.
\begin{enumerate}
\item[(E3)] Since
\[
LU(0,a')=LU(a')=L(a')\leq U(a')=U(a\rightarrow0)
\]
we have
\[
L(U(a')\odot a)=L(U(0,a')\odot a)\leq UL(a,0)=UL(0)=U(0)=C,
\]
i.e.\ $L(U(a')\odot a)=\{0\}$ and hence $a'\odot a\in L(U(a')\odot a)=\{0\}$, i.e.\ $a'\odot a=0$. If, conversely, $a\odot b=0$ then $a'\leq b$ and hence
\[
L(U(b,a')\odot a)=L(U(b)\odot a)=\{0\}\leq C=U(0)=UL(0)=UL(a,0)
\]
whence
\[
L(b)=LU(b)=LU(b,a')\leq U(a\rightarrow0)=U(a')
\]
showing $b\leq a'$ and hence $b=a'$. This shows that $a\odot b=0$ is equivalent to $b=a'$. Now the following are equivalent:
\begin{align*}
       a+b & =1, \\
a'\odot b' & =0, \\
        b' & =a, \\
         b & =a'.
\end{align*}
\end{enumerate}
Finally the following are equivalent:
\begin{align*}
& a\leq b\text{ in }\mathbb E(\mathbf C), \\
& a+b'\text{ is defined}, \\
& a'\odot b\text{ is defined}, \\
& a\leq b\text{ in }\mathbf C.
\end{align*}
Hence the induced order in $\mathbb E(\mathbf C)$ coincides with the order in $\mathbf C$.
\end{proof}

Every monotonous effect algebra can be reconstructed from its assigned commutative unsharp residuated poset as the following theorem shows.

\begin{theorem}
Let $\mathbf E$ be a monotonous effect algebra. Then $\mathbb E(\mathbb C(\mathbf E))=\mathbf E$.
\end{theorem}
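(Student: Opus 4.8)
The plan is to exploit the fact that both constructions $\mathbb C$ and $\mathbb E$ leave the underlying set $E$, the involution ${}'$ and the constants $0,1$ completely untouched. Consequently the partial algebras $\mathbb E(\mathbb C(\mathbf E))$ and $\mathbf E$ already agree on their unary and nullary parts, and the entire claim collapses to a single verification: that the partial addition reconstructed by $\mathbb E$ coincides with the original $+$, both in its domain of definition and in its values.

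First I would fix $a,b\in E$ and simply unravel the two definitions in sequence. Writing $\oplus$ for the addition of $\mathbb E(\mathbb C(\mathbf E))$, its defining clause reads $a\oplus b=(a'\odot b')'$, defined exactly when $a\leq b'$, where $\odot$ is the multiplication manufactured in $\mathbb C(\mathbf E)$. By Theorem~\ref{th1} that multiplication is $x\odot y=(x'+y')'$, defined precisely when $x'\leq y$. Substituting $x:=a'$ and $y:=b'$, the product $a'\odot b'$ is defined if and only if $(a')'\leq b'$, i.e.\ $a\leq b'$; this is exactly the side condition demanded for $\oplus$, and by Lemma~\ref{lem1}(iii) it is in turn equivalent to $a+b$ being defined in $\mathbf E$. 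Hence the domain of $\oplus$ is identical with the domain of $+$.

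It then remains to match the values on this common domain. When the product is defined, using $a''=a$ (Lemma~\ref{lem1}(i)) one computes $a'\odot b'=((a')'+(b')')'=(a+b)'$, so that $a\oplus b=((a+b)')'=a+b$. This establishes that $\oplus$ and $+$ agree as partial operations, and therefore that $\mathbb E(\mathbb C(\mathbf E))$ and $\mathbf E$ are literally the same partial algebra.

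The whole argument is essentially bookkeeping about when each partial term is defined; there is no genuine obstacle, only the need for care. The one point that must be handled deliberately is to keep the double primes straight and to invoke Lemma~\ref{lem1}(iii) at precisely the right moment, so that the formal side condition $a\leq b'$ inherited from the definition of $+$ in $\mathbb E(\mathbf C)$ is recognised as exactly the condition under which $a+b$ is defined in the original effect algebra. Once the defined-domains and the values are aligned in this way, equality of the two structures is immediate.
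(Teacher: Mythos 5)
Your proposal is correct and takes essentially the same route as the paper's own proof: both unravel the two definitions to see that $a\oplus b$ is defined iff $a\leq b'$ iff $a+b$ is defined, and then compute $a\oplus b=(a'\odot b')'=(a''+b'')''=a+b$, with the remaining data $({}',0,1)$ untouched by either construction. There is nothing to correct.
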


\begin{proof}
Let
\begin{align*}
                      \mathbf E & =(E,+,{}',0,1)\text{ with induced order }\leq, \\
           \mathbb C(\mathbf E) & =(E,\leq,\odot,\rightarrow,{}',0,1), \\
\mathbb E(\mathbb C(\mathbf E)) & =(E,\oplus,{}',0,1)
\end{align*}
and $a,b\in E$. Then the following are equivalent:
\begin{align*}
& a\oplus b\text{ is defined}, \\
& a\leq b'\text{ in }\mathbb C(\mathbf E), \\
& a\leq b'\text{ in }\mathbf E, \\
& a+b\text{ is defined}
\end{align*}
and in this case
\[
a\oplus b=(a'\odot b')'=(a''+b'')''=a+b.
\]
\end{proof}

The concept of a pseudoeffect algebra was introduced by A.~Dvure\v censkij and T.~Vetterlein (\cite{DV}). The motivation for this concept and its connection to the logic of quantum mechanics is included in that paper. Our next goal is to show that also these algebras can be converted into some kind of unsharp residuated posets which, however, need not be commutative as in the case of effect algebras. We start with definition taken from \cite{DV}.

\begin{definition}
A {\em pseudoeffect algebra} is a partial algebra $\mathbf P=(P,+,\,\bar{}\,,\,\tilde{}\,,0,1)$ of type $(2,1,1,0,0)$ where $(P,\,\bar{}\,,\,\tilde{}\,,0,1)$ is an algebra and $+$ is a partial operation satisfying the following conditions for all $x,y,z\in P$:
\begin{enumerate}[{\rm(P1)}]
\item If $x+y$ is defined then there exist $u,w\in P$ with $u+x=y+w=x+y$,
\item $(x+y)+z$ is defined if and only if $x+(y+z)$ is defined, and in this case $(x+y)+z=x+(y+z)$,
\item $\bar x$ is the unique $u\in P$ with $u+x=1$, and $\tilde x$ is the unique $w\in P$ with $x+w=1$,
\item if $1+x$ or $x+1$ is defined then $x=0$.
\end{enumerate}
The {\em pseudoeffect algebra} $\mathbf P$ is called
\begin{itemize}
\item {\em good} if $\widetilde{\bar x+\bar y}=\overline{\tilde x+\tilde y}$ for all $x,y\in P$ with $\tilde x\leq y$,
\item {\em monotonous} if for all $x\in P$ and all non-empty subsets $A,B$ of $P$ the following hold:
\begin{itemize}
\item $A\cup B\leq\bar x$ and $L(A)\leq U(B)$ imply $L(A+x)\leq U(B+x)$,
\item $A\cup B\leq\tilde x$ and $L(A)\leq U(B)$ imply $L(x+A)\leq U(x+B)$.
\end{itemize}
\end{itemize}
\end{definition}

On $P$ a binary relation $\leq$ can be defined by
\[
x\leq y\text{ if there exists some }z\in E\text{ with }x+z=y
\]
($x,y\in P$). Then $(P,\leq,0,1)$ is a bounded poset and $\leq$ is called the {\em induced order} of $\mathbf P$.

For our investigation we need the following results taken from \cite{DV}.

\begin{lemma}
If $\mathbf P=(P,+,\,\bar{}\,,\,\tilde{}\,,0,1)$ is a pseudoeffect algebra, $\leq$ its induced order and $a,b,c\in P$ then
\begin{enumerate}[{\rm(i)}]
\item $\bar{\tilde a}=\tilde{\bar a}=a$,
\item $a\leq b$ implies $\bar b\leq\bar a$ and $\tilde b\leq\tilde a$,
\item $a+b$ is defined if and only if $a\leq\bar b$,
\item if $a\leq b$ and $b+c$ is defined then so is $a+c$ and $a+c\leq b+c$,
\item if $a\leq b$ and $c+b$ is defined then so is $c+a$ and $c+a\leq c+b$,
\item if $a\leq b$ then $a+\widetilde{\bar b+a}=\overline{a+\tilde b}+a=b$,
\item $a+0=0+a=a$,
\item $\bar0=\tilde0=1$ and $\bar1=\tilde1=0$,
\item the following are equivalent: $a\leq b$, there exists some $d\in P$ with $a+d=b$, there exists some $e\in P$ with $e+a=b$.
\end{enumerate}
\end{lemma}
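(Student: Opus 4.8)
The plan is to derive all nine items purely from the axioms (P1)--(P4), the uniqueness clauses in (P3), and the definition of the induced order, taking care over the order of proof since several items are mutually dependent. I would start with the complement and zero identities, which need only (P3) and (P4). For (i), the defining equations $a+\tilde a=1$ and $\bar a+a=1$ exhibit $a$ as a complement of $\tilde a$ and of $\bar a$, so the uniqueness in (P3) immediately forces $\overline{\tilde a}=a$ and $\widetilde{\bar a}=a$. For (viii), the equation $\bar1+1=1$ has the form $x+1$, so (P4) gives $\bar1=0$, and dually $\tilde1=0$; applying (i) then yields $\bar0=\overline{\tilde1}=1$ and $\tilde0=\widetilde{\bar1}=1$. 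Reading $\bar0=1$ and $\tilde0=1$ back through (P3) shows $1+0=0+1=1$, after which (vii) follows by reassociation: from $a+\tilde a=1$ one gets $(0+a)+\tilde a=0+1=1$, so $0+a=\overline{\tilde a}=a$, and dually $a+0=\widetilde{\bar a}=a$.

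The heart of the matter is the order characterization, and the linchpin is (ix), which is where (P1) does its work. Given $a\le b$, i.e.\ $a+d=b$, axiom (P1) produces some $u$ with $u+a=a+d=b$, so $a\le b$ is witnessed on the left as well; the converse is symmetric. With (ix) available, (ii) follows by inserting a one-sided witness into $\bar b+b=1$ or $b+\tilde b=1$ and reassociating: writing $b=e+a$ gives $(\bar b+e)+a=1$, so $\bar b+e=\bar a$ by uniqueness, i.e.\ $\bar b\le\bar a$; writing $b=a+d$ gives $a+(d+\tilde b)=1$, so $d+\tilde b=\tilde a$, i.e.\ $\tilde b\le\tilde a$. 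Then (iii) is obtained as follows: if $a+b$ is defined, reassociating $(a+b)+\widetilde{a+b}=1$ to $a+(b+\widetilde{a+b})=1$ forces $b+\widetilde{a+b}=\tilde a$, so $b\le\tilde a$, whence (ii) and (i) give $a=\overline{\tilde a}\le\bar b$; conversely $a\le\bar b$ yields a left witness $e+a=\bar b$ by (ix), and $(e+a)+b=\bar b+b=1$ reassociates to $e+(a+b)=1$, forcing $a+b$ to be defined. The monotonicity items (iv) and (v) are the same reassociation move: a witness for $a\le b$ inserted into a defined sum $b+c$ (respectively $c+b$) reassociates to exhibit $a+c$ (respectively $c+a$) as defined and bounded above by $b+c$ (respectively $c+b$).

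Finally, for (vi) I would first note that $\bar b+a$ and $a+\tilde b$ are defined: (ii) gives $\bar b\le\bar a$ and $\tilde b\le\tilde a$, and (iii) together with (i) converts these into the required definedness. Then reassociating $(\bar b+a)+\widetilde{\bar b+a}=1$ to $\bar b+(a+\widetilde{\bar b+a})=1$ forces $a+\widetilde{\bar b+a}=\tilde{\bar b}=b$, while reassociating $\overline{a+\tilde b}+(a+\tilde b)=1$ to $(\overline{a+\tilde b}+a)+\tilde b=1$ forces $\overline{a+\tilde b}+a=\bar{\tilde b}=b$. The main obstacle is not any single computation---each step is one instance of the same ``reassociate and invoke uniqueness'' trick---but rather the bookkeeping forced by the pseudo (noncommutative) setting: one must consistently track the left complement $\bar{}$ against the right complement $\tilde{}$ and prove both one-sided versions of monotonicity and of the difference identity. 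The only device that lets a summand move from one side of a sum to the other is the quasi-commutativity axiom (P1), distilled into (ix); without it, neither (ii) nor (iii) would close. Accordingly I would emphasize establishing (i), (viii), and (ix) at the outset, since every remaining item bootstraps from uniqueness of complements, associativity, and the two-sided witnessing supplied by (P1).
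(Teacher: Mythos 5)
Your proof is correct, but there is no in-paper argument to compare it against: the paper states this lemma as a collection of known results ``taken from \cite{DV}'' (Dvure\v censkij and Vetterlein) and gives no proof, so your contribution is to supply the derivation the paper delegates to a citation. Checking it step by step, everything closes: (i) and (viii) follow from the uniqueness clause of (P3) and from (P4) applied to $\bar1+1=1$ and $1+\tilde1=1$; before proving (vii) you correctly secure that $1+0$ and $0+1$ are defined by reading $\bar0=1$ and $\tilde0=1$ back through (P3), which is exactly what licenses the reassociation $(0+a)+\tilde a=0+(a+\tilde a)$; (ix) is precisely the content of (P1) applied to a witness $a+d=b$ (with $u+a=a+d$ from the axiom's conclusion), and you rightly identify it as the only device that moves a witness from one side of a sum to the other; (ii) then needs (ix) because its two halves produce a left witness for $\tilde b\le\tilde a$ and a right-side uniqueness argument for $\bar b\le\bar a$, and your bookkeeping of $\bar{}\,$ versus $\tilde{}\,$ is consistent throughout; (iii) correctly routes through $b+\widetilde{a+b}=\tilde a$, then (ii) and (i) to get $a=\overline{\tilde a}\le\bar b$; (iv) and (v) are single applications of (P2) to the appropriately sided witness; and in (vi) you do not forget the definedness of $\bar b+a$ and $a+\tilde b$, which you obtain from (ii), (iii) and (i) before invoking uniqueness. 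The dependency order (i), (viii), (vii), (ix), (ii), (iii), (iv), (v), (vi) is acyclic, and your implicit use of the convention that definedness of $x+(y+z)$ in (P2) entails definedness of the inner sum is the standard reading for partial algebras. This is essentially the argument one finds in \cite{DV} itself, so while the route is not novel, it is the right one, and your closing observation---that (P1), distilled into (ix), is the load-bearing axiom without which (ii) and (iii) cannot be proved in the noncommutative setting---is exactly the correct structural diagnosis.
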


Due to the lack of commutativity, we must modify the concept of an unsharp residuated poset as follows:

\begin{definition}
An {\em unsharp residuated poset} is an ordered ninetuple $\mathbf R=(R,\leq,\odot,\rightarrow,\leadsto,\,\bar{}\,,\,\tilde{}\,,0,1)$ where $\rightarrow,\leadsto:R^2\rightarrow2^R$ and $\,\bar{}\,$ and $\,\tilde{}\,$ are unary operations on $R$ and the following hold for all $x,y,z\in R$:
\begin{enumerate}[{\rm(R1)}]
\item $(R,\leq,0,1)$ is a bounded poset,
\item $\tilde{\bar x}\approx\bar{\tilde x}\approx x$, and $x\leq y$ implies $\bar y\leq\bar x$ and $\tilde y\leq\tilde x$,
\item $(R,\odot,1)$ is a partial monoid where $x\odot y$ is defined if and only if $\tilde x\leq y$. Moreover, $\bar z\leq x\leq y$ implies $x\odot z\leq y\odot z$, and $\tilde z\leq x\leq y$ implies $z\odot x\leq z\odot y$,
\item $L(U(x,\bar y)\odot y)\leq UL(y,z)$ if and only if $LU(x,\bar y)\leq U(y\rightarrow z)$,
\item $L(y\odot U(x,\tilde y))\leq UL(y,z)$ if and only if $LU(x,\tilde y)\leq U(y\leadsto z)$,
\item $x\rightarrow0\approx\{\bar x\}$ and $x\leadsto0\approx\{\tilde x\}$,
\item $\widetilde{\bar x\odot\bar y}\approx\overline{\tilde x\odot\tilde y}$.
\end{enumerate}
The unsharp residuated poset $\mathbf R$ is called {\em divisible} if $x\leq y$ implies $(y\rightarrow x)\odot y=y\odot(y\leadsto x)=L(x)$ {\rm(}$x,y\in R${\rm)}.
\end{definition}

Conditions (R4) and (R5) are together called {\em unsharp adjointness}. Observe that because of $\bar y\leq U(x,\bar y)$ and $\tilde y\leq U(x,\tilde y)$ the expressions $U(x,\bar y)\odot y$ and $y\odot U(x,\tilde y)$ are well-defined.

Although now the situation is more complicated as for effect algebras, we are still able to prove that every good monotonous pseudoeffect algebra can be converted into an unsharp residuated poset.

\begin{theorem}
Let $\mathbf P=(P,+,\,\bar{}\,,\,\tilde{}\,,0,1)$ be a good monotonous pseudoeffect algebra with induced order $\leq$ and put
\begin{align*}
      x\odot y & :=\widetilde{\bar x+\bar y}\text{ if and only if }\tilde x\leq y, \\
x\rightarrow y & :=\bar x+L(x,y), \\
   x\leadsto y & :=L(x,y)+\tilde x
\end{align*}
{\rm(}$x,y\in P${\rm)}. Then $\mathbb R(\mathbf P):=(P,\leq,\odot,\rightarrow,\leadsto,\,\bar{}\,,\,\tilde{}\,,0,1)$ is a divisible unsharp residuated poset.
\end{theorem}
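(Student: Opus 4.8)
The plan is to mirror the structure of the proof of Theorem~\ref{th1} (the commutative case), verifying the axioms (R1)--(R7) and divisibility in turn, but now keeping careful track of left versus right multiplication because $\odot$ is no longer commutative. Conditions (R1) and (R2) are immediate from the poset structure and Lemma on pseudoeffect algebras (parts (i)--(ii)). For (R3), the first part follows from the definition of $\odot$ together with part (iii) of the Lemma (which says $a+b$ is defined iff $a\leq\bar b$), and the two monotonicity implications will come from the defining formula $x\odot y=\widetilde{\bar x+\bar y}$ applied together with the two monotonicity properties (iv) and (v) of $+$ and the antitonicity of the two complementations. Condition (R7) is essentially a restatement of goodness: unwinding $\widetilde{\bar x\odot\bar y}$ and $\overline{\tilde x\odot\tilde y}$ through the definition of $\odot$ should reduce precisely to the identity $\widetilde{\bar x+\bar y}=\overline{\tilde x+\tilde y}$, and (R6) follows as in (C4) by noting $L(x,0)=\{0\}$ so that $x\rightarrow0=\bar x+\{0\}=\{\bar x\}$ and dually $x\leadsto0=\{0\}+\tilde x=\{\tilde x\}$.

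The heart of the argument is the unsharp adjointness, namely (R4) and (R5). I would prove (R4) exactly as (C3) was proved: for the forward direction, use $\bar y\leq U(x,\bar y)$, the fact that $U(x,\bar y)\odot y\leq y$ and $L(y,z)\leq y$ to manipulate $LU(x,\bar y)=L\bigl(\bar y+(U(x,\bar y)\odot y)\bigr)$ into $U\bigl(\bar y+L(y,z)\bigr)=U(y\rightarrow z)$; for the converse, use $\bar y\leq y\rightarrow z$ and $L(y,z)\leq y$ to compute $U\bigl((y\rightarrow z)\odot y\bigr)=U\bigl((\bar y+L(y,z))\odot y\bigr)=UL(y,z)$ via the definition of $\odot$. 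The key new point is that in the non-commutative setting the partner condition (R5) is \emph{not} obtained by symmetry: it must be verified separately using the other implication, operation $\leadsto$, and the right-hand versions of the monotonicity and cancellation lemmas. Here $y\leadsto z=L(y,z)+\tilde y$ is built with $\tilde y$ on the right, so I expect to use $\tilde y\leq U(x,\tilde y)$, the identity $y\odot(L(y,z)+\tilde y)$, and the computation $U\bigl(y\odot(y\leadsto z)\bigr)=UL(y,z)$.

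For divisibility, assuming $a\leq b$, I would show both $(b\rightarrow a)\odot b=L(a)$ and $b\odot(b\leadsto a)=L(a)$. The first uses $L(b,a)=L(a)$ (since $a\leq b$) together with $(b\rightarrow a)\odot b=(\bar b+L(a))\odot b$, which by the definition of $\odot$ unwinds to $\widetilde{\overline{(\bar b+L(a))}+\bar b}$ and should collapse to $L(a)$; the second is the dual computation with $\leadsto$ and $\tilde{}$. The main obstacle, and the step I would check most carefully, is precisely this bookkeeping of which complementation ($\bar{}$ versus $\tilde{}$) and which side of $+$ appears at each stage: because $\bar{\tilde a}=\tilde{\bar a}=a$ but the two complementations differ in general, the cancellation identity from part (vi) of the Lemma, $a+\widetilde{\bar b+a}=\overline{a+\tilde b}+a=b$, has two distinct forms that must be matched to the left- and right-residual computations respectively. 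Goodness is what guarantees these two one-sided computations are mutually consistent, so I would expect to invoke (R7) / the goodness hypothesis at exactly the point where the $\rightarrow$-divisibility and the $\leadsto$-divisibility must agree.
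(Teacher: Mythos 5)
Your proposal is correct and follows essentially the same route as the paper: an axiom-by-axiom verification mirroring the commutative case, using the cancellation identity $a+\widetilde{\bar b+a}=\overline{a+\tilde b}+a=b$ together with the monotonicity hypothesis to prove (R4) and (R5) separately (not by symmetry), and the two one-sided versions of that identity for divisibility. One small correction to your expectation: goodness is not invoked to make the $\rightarrow$- and $\leadsto$-divisibility computations agree (each is verified independently), but rather to rewrite $b\odot u=\widetilde{\bar b+\bar u}$ as $\overline{\tilde b+\tilde u}$, i.e.\ to make the tilde-form of $\odot$ available wherever a right-handed computation is needed, namely in the second monotonicity clause of (R3), throughout (R5), and in the computation of $b\odot(b\leadsto a)$.
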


\begin{proof}
Let $a,b,c\in P$. Obviously (R1), (R2), the first part of (R3) and (R7) hold.
\begin{enumerate}
\item[(R3)] If $\bar c\leq a\leq b$ then
\[
a\odot c=\widetilde{\bar a+\bar c}\leq\widetilde{\bar b+\bar c}=b\odot c,
\]
and if $\tilde c\leq a\leq b$ then
\[
c\odot a=\overline{\tilde c+\tilde a}\leq\overline{\tilde c+\tilde b}=c\odot b.
\]
\item[(R4)] If $L(U(a,\bar b)\odot b)\leq UL(b,c)$ then because of $\bar b\leq U(a,\bar b)$, $U(a,\bar b)\odot b\leq b$ and $L(b,c)\leq b$ we have
\[
LU(a,\bar b)=L(\bar b+\widetilde{\overline{U(a,\bar b)}+\bar b})=L(\bar b+(U(a,\bar b)\odot b))\leq U(\bar b+L(b,c))=U(b\rightarrow c).
\]
If, conversely, $LU(a,\bar b)\leq U(b\rightarrow c)$ then because of $\bar b\leq U(a,\bar b)$, $\bar b\leq b\rightarrow c$ and $L(b,c)\leq b$ we have
\[
L(U(a,\bar b)\odot b)\leq U((b\rightarrow c)\odot b)=U((\bar b+L(b,c))\odot b)=U(\widetilde{\overline{\bar b+L(b,c)}+\bar b})=UL(b,c).
\]
\item[(R5)] If $L(b\odot U(a,\tilde b))\leq UL(b,c)$ then because of $\tilde b\leq U(a,\tilde b)$, $b\odot U(a,\tilde b)\leq b$ and $L(b,c)\leq b$ we have
\[
LU(a,\tilde b)=L(\overline{\tilde b+\widetilde{U(a,\tilde b)}}+\tilde b)=L((b\odot U(a,\tilde b))+\tilde b)\leq U(L(b,c)+\tilde b)=U(b\leadsto c).
\]
If, conversely, $LU(a,\tilde b)\leq U(b\leadsto c)$ then because of $\tilde b\leq U(a,\tilde b)$, $\tilde b\leq b\leadsto c$ and $L(b,c)\leq b$ we have
\[
L(b\odot U(a,\tilde b))\leq U(b\odot(b\leadsto c))=U(b\odot(L(b,c)+\tilde b))=U(\overline{\tilde b+\widetilde{L(b,c)+\tilde b}})=UL(b,c).
\]
\item[(R6)] We have
\begin{align*}
x\rightarrow0 & \approx\bar x+L(x,0)\approx\{\bar x\}, \\
   x\leadsto0 & \approx L(x,0)+\tilde x\approx\{\tilde x\}.
\end{align*}
\end{enumerate}
If, finally, $a\leq b$ then $L(a)\leq b$ and hence
\begin{align*}
(b\rightarrow a)\odot b & =(\bar b+L(b,a))\odot b=\widetilde{\overline{\bar b+L(a)}+\bar b}=L(a), \\
    b\odot(b\leadsto a) & =b\odot(L(b,a)+\tilde b)=\overline{\tilde b+\widetilde{L(a)+\tilde b}}=L(a)
\end{align*}
proving divisibility.
\end{proof}

Surprisingly, also every unsharp residuated poset can be organized into a good pseudoeffect algebra.

\begin{theorem}
Let $\mathbf R=(R,\leq,\odot,\rightarrow,\leadsto,\,\bar{}\,,\,\tilde{}\,,0,1)$ be an unsharp residuated poset and put
\[
x+y:=\widetilde{\bar x\odot\bar y}\text{ if and only if }x\leq\bar y
\]
{\rm(}$x,y\in R${\rm)}. Then $\mathbb P(\mathbf R):=(R,+,\,\bar{}\,,\,\tilde{}\,,0,1)$ is a good pseudoeffect algebra whose induced order coincides with the order in $\mathbf R$.
\end{theorem}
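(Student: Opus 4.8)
The plan is to verify the four pseudoeffect-algebra axioms (P1)--(P4), the goodness condition, and the coincidence of the induced order with the order of $\mathbf R$, largely mirroring the commutative converse theorem above but now keeping track of the two negations $\bar{}$ and $\tilde{}$. First I would record a few preliminary facts. Since $\bar{}$ and $\tilde{}$ are mutually inverse antitone bijections by (R2), they interchange the bounds, so $\bar 1=\tilde 1=0$ and $\bar 0=\tilde 0=1$. From the monotonicity part of (R3) together with $x\odot 1=1\odot x=x$ one gets $p\odot q\leq p$ and $p\odot q\leq q$ whenever $p\odot q$ is defined (take $z=q,x=p,y=1$ using $\bar q\leq p$, resp.\ $z=p,x=q,y=1$ using $\tilde p\leq q$). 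Applying $\tilde{}$ and $\bar{}$ to (R7) and using (R2) yields the two dual forms $x+y=\widetilde{\bar x\odot\bar y}=\overline{\tilde x\odot\tilde y}$ together with $\overline{x+y}=\bar x\odot\bar y$ and $\widetilde{x+y}=\tilde x\odot\tilde y$ whenever $x+y$ is defined (i.e.\ $x\leq\bar y$); in particular $x,y\leq x+y$.

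The axioms (P2), (P4) and goodness are then quick. For (P4), $1+x$ defined means $1\leq\bar x$, i.e.\ $\bar x=1$, i.e.\ $x=0$, and $x+1$ defined forces $x\leq\bar 1=0$. For (P2), using $\overline{x+y}=\bar x\odot\bar y$ one computes $(x+y)+z=\widetilde{(\bar x\odot\bar y)\odot\bar z}$ and $x+(y+z)=\widetilde{\bar x\odot(\bar y\odot\bar z)}$, so associativity of $+$, including the matching of the domains of definition, is inherited from associativity of the partial monoid $\odot$. For goodness, for $x,y$ with $\tilde x\leq y$ (which is exactly the condition making $x\odot y$, and hence both sides, defined) the dual forms give $\widetilde{\bar x+\bar y}=\widetilde{\overline{x\odot y}}=x\odot y$ and $\overline{\tilde x+\tilde y}=\overline{\widetilde{x\odot y}}=x\odot y$, so the two sides agree.

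For (P3) the key is the characterisation: for $r,s$ with $r\odot s$ defined, $r\odot s=0$ iff $r=\bar s$ iff $s=\tilde r$. The converse direction reduces to the identity $\bar s\odot s=0$, which I would get from (R4) with $x=0$, $y=s$, $z=0$: the right-hand inequality $L(\bar s)=LU(\bar s)\leq U(\bar s)=U(s\rightarrow 0)$ holds trivially, so the left-hand one gives $L(U(\bar s)\odot s)\leq UL(s,0)=U(0)$, i.e.\ $L(U(\bar s)\odot s)=\{0\}$; since by monotonicity $\bar s\odot s$ is a lower bound of $U(\bar s)\odot s$ lying in that set, $\bar s\odot s\in L(U(\bar s)\odot s)=\{0\}$. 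For the forward direction, if $r\odot s=0$ then $\bar s\leq r$, so $U(r,\bar s)=U(r)$ and $0=r\odot s\in U(r)\odot s$, whence $L(U(r,\bar s)\odot s)=\{0\}\leq UL(s,0)$; by (R4) then $L(r)=LU(r,\bar s)\leq U(s\rightarrow 0)=U(\bar s)$, forcing $r\leq\bar s$ and hence $r=\bar s$. Now $u+x=1$ means $\bar u\odot\bar x=0$ with $u\leq\bar x$, which by the characterisation ($s=\tilde r$) is equivalent to $u=\bar x$; dually $x+w=1$ is equivalent to $w=\tilde x$, giving (P3) together with uniqueness.

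The main obstacle is (P1): given $x+y$ defined I must produce a left complement $u$ with $u+x=x+y$ and a right complement $w$ with $y+w=x+y$. Writing $d=x+y$, the natural candidates, read off from the commutative case and from Lemma~(vi) for pseudoeffect algebras, are $u:=\bar x\odot d$ and $w:=d\odot\tilde y$; using $p\odot q\leq p,q$ one checks $u\leq\bar x$ and $y\leq\bar w$, so $u+x$ and $y+w$ are defined. Since $\overline d=\bar x\odot\bar y$, the required equalities $u+x=d$ and $y+w=d$ amount to the monoid identities $\overline{\bar x\odot d}\odot\bar x=\bar x\odot\bar y$ and $\bar y\odot\overline{d\odot\tilde y}=\bar x\odot\bar y$, which I expect to be the hard core of the proof. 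I would establish them through the unsharp adjointness conditions (R4) and (R5), exactly in the style of the (P3) argument: since $U(\overline{\bar x\odot d},\overline{\bar x})=U(\overline{\bar x\odot d})$ and right multiplication by $\bar x$ is monotone, the left factor of (R4) collapses to $L(\overline{\bar x\odot d}\odot\bar x)$, and a suitable choice of the parameter $z$ pins this element down to $\bar x\odot\bar y$; the identity for $w$ is handled symmetrically via (R5). Finally, once (P1)--(P4) hold, the order coincidence is immediate in one direction, as $a+z\geq a$ shows that $a\leq b$ in the induced order implies $a\leq b$ in $\mathbf R$, while the reverse direction uses the complement just constructed, namely that for $a\leq b$ in $\mathbf R$ the element $z:=b\odot\tilde a$ satisfies $a+z=b$.
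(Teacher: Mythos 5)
Your preliminaries, (P2), (P4), goodness, and (P3) are all correct and essentially match the paper (which dismisses (P2), (P4) and goodness as obvious and proves (P3) by exactly your cone argument via (R4)/(R5)); in fact your derivation of $x,y\leq x+y$ from $p\odot q\leq p$ and $p\odot q\leq q$ (monotonicity in (R3) with $y=1$) is shorter than the paper's, which obtains these facts by first proving $a\rightarrow a=1$ and $a\leadsto a=1$ through adjointness. The genuine gap is in (P1), precisely at the point you yourself flag as the hard core. You name the right candidates $u=\bar x\odot d$ and $w=d\odot\tilde y$ (these are literally the paper's $\overline{x+\widetilde{x+y}}$ and $\widetilde{\overline{x+y}+y}$, since $\overline{x+\tilde d}=\overline{\widetilde{\bar x\odot d}}=\bar x\odot d$) and you correctly check definedness, but the equalities $u+x=d$ and $y+w=d$ are only ``expected'' to follow from (R4)/(R5) ``with a suitable choice of the parameter $z$''. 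That sketch does not work as stated: unsharp adjointness only transforms one inequality between cone sets into another, so at best it places the element $\bar u\odot\bar x$ below some set $UL(b,z)$ or $U(\cdots)$; it cannot by itself deliver the exact identity $\overline{\bar x\odot d}\odot\bar x=\bar x\odot\bar y$, and you never exhibit the $z$ that is supposed to pin it down. Since $\rightarrow$ and $\leadsto$ are abstract operations constrained only by (R4)--(R6), there is no evident choice.

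The paper closes exactly this gap with no further appeal to adjointness, using machinery you have already established, namely (P2) and (P3): since $x\leq d$, the sum $x+\tilde d$ is defined and $u=\overline{x+\tilde d}$, so $u+(x+\tilde d)=1$ by (P3); associativity (P2) turns this into $(u+x)+\tilde d=1$, and the uniqueness in (P3) forces $u+x=\overline{\tilde d}=d$; symmetrically $(\bar d+y)+w=1$ gives $\bar d+(y+w)=1$ and hence $y+w=\widetilde{\bar d}=d$. The same unproven claim recurs in your final sentence: $a+(b\odot\tilde a)=b$ for $a\leq b$ is asserted without justification. It can be repaired by the same trick (note $b\odot\tilde a=\widetilde{\bar b+a}$, and $\bar b+(a+\widetilde{\bar b+a})=(\bar b+a)+\widetilde{\bar b+a}=1$ forces $a+\widetilde{\bar b+a}=\widetilde{\bar b}=b$), or avoided entirely as in the paper: once (P1)--(P4) are proved, $a\leq b$ in the induced order is equivalent, by item (iii) of the lemma on pseudoeffect algebras, to the definedness of $a+\tilde b$, which by the definition of $+$ is equivalent to $a\leq\overline{\tilde b}=b$ in $\mathbf R$. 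So your outline is faithful to the paper everywhere except at the decisive step of (P1), where the proposed adjointness computation should be replaced by the associativity-plus-uniqueness argument.
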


\begin{proof}
Let $a,b,c\in R$. Obviously, (P2) and (P4) hold.
\begin{enumerate}
\item[(P3)] Since
\[
LU(0,\tilde a)=LU(\tilde a)=L(\tilde a)\leq U(\tilde a)=U(a\leadsto0)
\]
we have
\[
L(a\odot U(\tilde a))=L(a\odot U(0,\tilde a))\leq UL(a,0)=UL(0)=U(0)=R,
\]
i.e.\ $L(a\odot U(\tilde a))=\{0\}$ and hence $a\odot\tilde a\in L(a\odot U(\tilde a))=\{0\}$, i.e.\ $a\odot\tilde a=0$. If, conversely, $a\odot b=0$ then $\tilde a\leq b$ and hence
\[
L(a\odot U(b,\tilde a))=L(a\odot U(b))=\{0\}\leq R=U(0)=UL(0)=UL(a,0)
\]
whence
\[
L(b)=LU(b)=LU(b,\tilde a)\leq U(a\leadsto0)=U(\tilde a)
\]
showing $b\leq\tilde a$ and hence $b=\tilde a$. This shows that $a\odot b=0$ is equivalent to $b=\tilde a$. Now the following are equivalent:
\begin{align*}
& a+b=1, \\
& \bar a\odot\bar b=0, \\
& \bar b=a, \\
& a=\bar b.
\end{align*}
Since
\[
LU(0,\bar b)=LU(\bar b)=L(\bar b)\leq U(\bar b)=U(b\rightarrow0)
\]
we have
\[
L(U(\bar b)\odot b)=L(U(0,\bar b)\odot b)\leq UL(b,0)=UL(0)=U(0)=R,
\]
i.e.\ $L(U(\bar b)\odot b)=\{0\}$ and hence $\bar b\odot b\in L(U(\bar b)\odot b)=\{0\}$, i.e.\ $\bar b\odot b=0$. If, conversely, $a\odot b=0$ then $\bar b\leq a$ and hence
\[
L(U(a,\bar b)\odot b)=L(U(a)\odot b)=\{0\}\leq R=U(0)=UL(0)=UL(b,0)
\]
whence
\[
L(a)=LU(a)=LU(a,\bar b)\leq U(b\rightarrow0)=U(\bar b)
\]
showing $a\leq\bar b$ and hence $a=\bar b$. This shows that $a\odot b=0$ is equivalent to $a=\bar b$. Now the following are equivalent:
\begin{align*}
                  a+b & =1, \\
\tilde a\odot\tilde b & =0, \\
             \tilde a & =b, \\
                    b & =\tilde a.
\end{align*}
\item[(P1)] Since
\[
L(a\odot U(1,\tilde a))=L(a\odot U(1))=L(a)\leq U(a)=UL(a)=UL(a,a)
\]
we have
\[
R=L(1)=LU(1)=LU(1,\tilde a)\leq U(a\leadsto a),
\]
i.e.\ $U(a\leadsto a)=\{1\}$ and hence $a\leadsto a=1$. If $a\leq\bar b$ then because of $LU(\bar b,a)\leq U(\bar a\leadsto\bar a)$ we have
\[
L(\bar a\odot U(\bar b))=L(\bar a\odot U(\bar b,a))\leq UL(\bar a,\bar a)=UL(\bar a)=U(\bar a)
\]
whence $\bar a\odot\bar b\leq\bar a$ and therefore $a=\tilde{\bar a}\leq\widetilde{\bar a\odot\bar b}=a+b$ showing that $a+\widetilde{a+b}$ is defined. Now in case $a\leq\bar b$ the following are equivalent:
\begin{align*}
                    c & =\overline{a+\widetilde{a+b}}, \\
c+(a+\widetilde{a+b}) & =1, \\
(c+a)+\widetilde{a+b} & =1, \\
                  c+a & =a+b.
\end{align*}
Since
\[
L(U(1,\bar a)\odot a)=L(U(1)\odot a)=L(a)\leq U(a)=UL(a)=UL(a,a)
\]
we have
\[
R=L(1)=LU(1)=LU(1,\bar a)\leq U(a\rightarrow a),
\]
i.e.\ $U(a\rightarrow a)=\{1\}$ and hence $a\rightarrow a=1$. If $b\leq\tilde a$ then because of $LU(\tilde a,b)\leq U(\tilde b\rightarrow\tilde b)$ we have
\[
L(U(\tilde a)\odot\tilde b)=L(U(\tilde a,b)\odot\tilde b)\leq UL(\tilde b,\tilde b)=UL(\tilde b)=U(\tilde b)
\]
whence $\tilde a\odot\tilde b\leq\tilde b$ and therefore $b=\bar{\tilde b}\leq\overline{\tilde a\odot\tilde b}=a+b$ showing that $\overline{a+b}+b$ is defined. Now in case $b\leq\tilde a$ the following are equivalent:
\begin{align*}
                   c & =\widetilde{\overline{a+b}+b}, \\
(\overline{a+b}+b)+c & =1, \\
\overline{a+b}+(b+c) & =1, \\
                 b+c & =a+b.
\end{align*}
\end{enumerate}
Finally, the following are equivalent:
\begin{align*}
& a\leq b\text{ holds in }\mathbb P(\mathbf R), \\
& a+\tilde b\text{ is defined}, \\
& \bar a\odot b\text{ is defined}, \\
& a\leq b\text{ holds in }\mathbf R.
\end{align*}
Hence the induced order in $\mathbb P(\mathbf R)$ coincides with the order in $\mathbf R$.
\end{proof}

Similarly as in the case of effect algebras we can show that every good monotonous pseudoeffect algebra can be reconstructed from its assigned unsharp residuated poset.

\begin{theorem}
Let $\mathbf P$ be a good monotonous pseudoeffect algebra. Then $\mathbb P(\mathbb R(\mathbf P))=\mathbf P$.
\end{theorem}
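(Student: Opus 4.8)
The plan is to imitate verbatim the structure used for $\mathbb E(\mathbb C(\mathbf E))=\mathbf E$, splitting the claim into an equivalence of domains and an equality of values, the only new feature being that $\,\bar{}\,$ and $\,\tilde{}\,$ no longer form an involution but a pair of mutually inverse maps. I would set $\mathbf P=(P,+,\,\bar{}\,,\,\tilde{}\,,0,1)$ with induced order $\leq$, write $\mathbb R(\mathbf P)=(P,\leq,\odot,\rightarrow,\leadsto,\,\bar{}\,,\,\tilde{}\,,0,1)$, and $\mathbb P(\mathbb R(\mathbf P))=(P,\oplus,\,\bar{}\,,\,\tilde{}\,,0,1)$, where by definition $a\oplus b=\widetilde{\bar a\odot\bar b}$ precisely when $a\leq\bar b$. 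Since the carrier $P$, both unary operations $\,\bar{}\,,\,\tilde{}\,$ and the constants $0,1$ are transported unchanged through both constructions, it remains only to show that $\oplus$ and $+$ have the same domain and coincide on it.

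For the domains I would use that, by construction, the order of $\mathbb R(\mathbf P)$ is literally the induced order of $\mathbf P$. Hence the following are equivalent: $a\oplus b$ is defined; $a\leq\bar b$ in $\mathbb R(\mathbf P)$; $a\leq\bar b$ in $\mathbf P$; and finally $a+b$ is defined, the last step being item~(iii) of the quoted pseudoeffect-algebra lemma. This settles the domain part, exactly paralleling the effect-algebra case.

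For the values, assume $a\leq\bar b$. Unfolding the definition of $\odot$ with $x=\bar a$, $y=\bar b$ gives $\bar a\odot\bar b=\widetilde{\overline{\bar a}+\overline{\bar b}}$, which is defined because $\widetilde{\bar a}=a\leq\bar b$ by item~(i) of the lemma. The double complements $\overline{\bar a},\overline{\bar b}$ cannot be cancelled by an involution law, and this is exactly where goodness enters: I would apply the identity $\widetilde{\bar x+\bar y}=\overline{\tilde x+\tilde y}$ with $x=\bar a$, $y=\bar b$, whose side condition $\tilde{\bar a}\leq\bar b$ is again just $a\leq\bar b$. This yields $\bar a\odot\bar b=\overline{\tilde{\bar a}+\tilde{\bar b}}=\overline{a+b}$, using $\tilde{\bar a}=a$, $\tilde{\bar b}=b$. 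Consequently $a\oplus b=\widetilde{\bar a\odot\bar b}=\widetilde{\overline{a+b}}=a+b$, once more by $\tilde{\bar z}=z$.

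The only genuinely delicate point, and the step I expect to require the most care, is the substitution for the involution identity $x''=x$ that was freely available in the effect-algebra proof: here it must be replaced by the two facts $\tilde{\bar z}=z$ and the goodness identity $\widetilde{\bar x+\bar y}=\overline{\tilde x+\tilde y}$. One must verify that the hypothesis of goodness is met at the moment it is invoked, but this reduces cleanly to the standing assumption $a\leq\bar b$; everything else is routine bookkeeping with the inverse maps $\,\bar{}\,$ and $\,\tilde{}\,$.
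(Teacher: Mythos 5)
Your proposal is correct and follows essentially the same path as the paper: the identical four-step equivalence for the domains (definedness of $\oplus$, $a\leq\bar b$ in $\mathbb R(\mathbf P)$, $a\leq\bar b$ in $\mathbf P$, definedness of $+$), followed by a one-line computation of the value. The only cosmetic difference is that you unfold $\bar a\odot\bar b$ first and invoke goodness of $\mathbf P$ directly (checking its side condition $\tilde{\bar a}=a\leq\bar b$), whereas the paper first applies axiom (R7) of $\mathbb R(\mathbf P)$ --- which is exactly goodness transported to $\odot$ --- and then unfolds; the two chains $a\oplus b=\widetilde{\overline{a+b}}=a+b$ and $a\oplus b=\overline{\widetilde{\bar{\tilde a}+\bar{\tilde b}}}=a+b$ are the same argument.
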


\begin{proof}
Let
\begin{align*}
                      \mathbf P & =(P,+,\,\bar{}\,,\,\tilde{}\,,0,1)\text{ with induced order }\leq, \\
           \mathbb R(\mathbf P) & =(P,\leq,\odot,\rightarrow,\leadsto,\,\bar{}\,,\,\tilde{}\,,0,1), \\
\mathbb P(\mathbb R(\mathbf P)) & =(P,\oplus,\,\bar{}\,,\,\tilde{}\,,0,1)
\end{align*}
and $a,b\in P$. Then the following are equivalent:
\begin{align*}
& a\oplus b\text{ is defined}, \\
& a\leq\bar b\text{ in }\mathbb R(\mathbf P), \\
& a\leq\bar b\text{ in }\mathbf P, \\
& a+b\text{ is defined}
\end{align*}
and in this case
\[
a\oplus b=\widetilde{\bar a\odot\bar b}=\overline{\tilde a\odot\tilde b}=\overline{\widetilde{\bar{\tilde a}+\bar{\tilde b}}}=a+b.
\]
\end{proof}

Authors' addresses:

Ivan Chajda \\
Palack\'y University Olomouc \\
Faculty of Science \\
Department of Algebra and Geometry \\
17.\ listopadu 12 \\
771 46 Olomouc \\
Czech Republic \\
ivan.chajda@upol.cz

Helmut L\"anger \\
TU Wien \\
Faculty of Mathematics and Geoinformation \\
Institute of Discrete Mathematics and Geometry \\
Wiedner Hauptstra\ss e 8-10 \\
1040 Vienna \\
Austria, and \\
Palack\'y University Olomouc \\
Faculty of Science \\
Department of Algebra and Geometry \\
17.\ listopadu 12 \\
771 46 Olomouc \\
Czech Republic \\
helmut.laenger@tuwien.ac.at
\end{document}